\newtheorem{theorem}{Theorem}
\newtheorem{lemma}{Lemma}
\theoremstyle{definition}
\theoremstyle{remark}
\DeclareMathOperator{\trace}{\mathrm{Tr}}
\newcommand{\R}{\mathbb{R}}
\def\by{y}
\def\br{r}
\def\bs{s}
\begin{document}

\title[cmc order 3]{Algebraic cmc hypersurface of order 3 in Euclidean Spaces}
\author{Oscar Perdomo}

\curraddr{Department of Mathematics\\
Central Connecticut State University\\
New Britain, CT 06050\\}

\email{ perdomoosm@ccsu.edu}

\author{Vladimir G. Tkachev}
\address{Department of Mathematics, Link\"oping University, Sweden}
\email{vladimir.tkatjev@liu.se}
\thanks{}

\date{\today}

\begin{abstract}
We prove that there are not algebraic  hypersurfaces of degree 3 in $\mathbb{R}^n$ with non zero constant mean curvature.
\end{abstract}

\subjclass[2000]{53C42, 53A10}
\maketitle

\section{Introduction}

Understanding algebraic constant mean curvature -cmc- surfaces in the Euclidean  spaces is a basic question that has very little progress. For the three dimensional space $\mathbb{R}^3$, Perdomo showed in \cite{P1} that there are not degree three algebraic cmc surfaces. Also, for surfaces in the Euclidean space $\mathbb{R}^3$, Do Carmo and Barbosa showed \cite{BD} that if $M=f^{-1}(0)$ with $f$ a polynomial and, $\nabla f$ never vanishes on $M$, then $M$ cannot have cmc unless $M$ is a plane, a cylinder or a sphere. Very little is known on the classification of all  immersed algebraic surfaces in $\mathbb{R}^3$ or the classification of all algebraic cmc hypersurfaces in $\mathbb{R}^n$.

In \cite{P1} it was proven that if $\phi:M\to \mathbb{R}^n$ is an immersion with constant mean curvature  $H\ne0$ and $\phi(M)=f^{-1}(0)$ where $f$ is an irreducible polynomial, and, for at least one point $x_0\in \phi(M)$, $\nabla f(x_0)$ does not vanishes, then
\begin{equation}\label{cmceq}
4(n-1)^2 H^2|\nabla f|^6-(2 |\nabla f|^2 \Delta f-\nabla f^t \,\nabla  |\nabla f|^2)^2 =pf
\end{equation}
with $p$ a polynomial. In this paper, we will say that an irreducible polynomial $f$ defines an \textit{algebraic cmc hypersurface} if $f$  satisfies condition (\ref{cmceq}) with some $H\ne0$. Notice that  $f^{-1}(0)$ may have singular points. If  $x_0$ is a regular point of $f$ then \eqref{cmceq} implies that $f^{-1}(0)$ is a cmc hypersurface in a neighbourhood of $x_0$.  With this definition in mind we show that there are not algebraic cmc hypersurfaces in the Euclidean $n$-dimensional space of degree $3$.

\begin{theorem}\label{th1}
If $f:\mathbb{R}^n\to \mathbb{R}$ is an irreducible polynomial of degree three, then, the zero level set of $f$ cannot be an algebraic cmc hypersurface. That is, $f$ cannot satisfy equation (\ref{cmceq}) with $H\ne0$.
\end{theorem}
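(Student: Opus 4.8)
The plan is to read information out of \eqref{cmceq} degree by degree, starting at the top. Write $f=f_3+f_2+f_1+f_0$ for the decomposition into homogeneous parts; since $f$ has degree $3$, $f_3\neq0$. In \eqref{cmceq}, $|\nabla f|^2$ has degree $\le4$, $\Delta f$ has degree $\le1$, and $\nabla|\nabla f|^2=2(\mathrm{Hess}\,f)\nabla f$ has degree $\le3$, so $E:=2|\nabla f|^2\Delta f-\nabla f^t\nabla|\nabla f|^2$ has degree $\le5$ and $E^2$ has degree $\le10$. Hence the left side of \eqref{cmceq} has degree $\le12$, and its degree-$12$ part is $4(n-1)^2H^2|\nabla f_3|^6$, which is nonzero because $H\neq0$ and $f_3\neq0$. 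Therefore $p$ has degree exactly $9$ and, comparing the degree-$12$ parts, $4(n-1)^2H^2|\nabla f_3|^6=p_9 f_3$; in particular $f_3$ divides $|\nabla f_3|^6$ in $\R[x_1,\dots,x_n]$, so every irreducible factor of $f_3$ divides the quartic $Q:=|\nabla f_3|^2$.

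Because $\deg Q=4$, the radical of $f_3$ has degree $\le4$, and I would run through the possible real factorizations of the cubic form $f_3$: an irreducible cubic; $\ell\cdot q$ with $q$ an irreducible real quadratic; $\ell_1\ell_2\ell_3$ with the $\ell_i$ pairwise non-proportional; $\ell^2 m$ with $\ell\not\parallel m$; and $\ell^3$. In every case but the last I would obtain a contradiction from $f_3^{\mathrm{rad}}\mid Q=|\nabla f_3|^2\ge0$: it forces $\nabla f_3$ to vanish identically on a hyperplane $\{\ell_0=0\}$, where $\ell_0$ is a suitable linear factor of $f_3^{\mathrm{rad}}$; writing $f_3=\ell_0\cdot g$ and using the Leibniz rule $\nabla f_3=(\nabla\ell_0)g+\ell_0\nabla g$, one gets $g\equiv0$ on $\{\ell_0=0\}$, hence $\ell_0\mid g$, which is incompatible with the assumed factorization type. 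For the irreducible case one instead notes that $f_3$ must vanish at a smooth real point, where $\nabla f_3$, hence $Q$, does not. This leaves only $f_3=\ell^3$.

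In that remaining case I would normalize: a rotation of $\R^n$ turns $\ell$ into a multiple of $x_1$, scaling $f$—which preserves \eqref{cmceq} with the same $H$, since each of the two terms on the left scales by the sixth power of the scaling factor—makes $f_3=x_1^3$, and a translation in the $x_1$-direction removes the $x_1^2$-term, so that $f=x_1^3+x_1 p_1(x')+p_0(x')$ with $x'=(x_2,\dots,x_n)$, $p_1$ affine in $x'$ and $p_0$ of degree $\le2$ in $x'$. Here the structure collapses: $\Delta f=6x_1+\mathrm{const}$ is affine, each $\partial_j f$ ($j\ge2$) is affine, and in $E$ the only degree-$5$ term, $12x_1(\partial_1 f)^2$, cancels, so $\deg E\le4$ and $\deg E^2\le8$; consequently, for every $d\ge9$ only the term $4(n-1)^2H^2|\nabla f|^6$ contributes to the degree-$d$ part of \eqref{cmceq}. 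Solving successively for $p_9,p_8,\dots$ and then imposing the lower-degree homogeneous identities turns \eqref{cmceq} into a finite system of polynomial equations in the coefficients of $p_1$ and $p_0$, which—using $H\neq0$—I would show forces $p_1\equiv0$ and $p_0$ into a form for which $f$ is reducible over $\R$ (for instance $f=x_1^3+c$), contradicting irreducibility, or else forces $H=0$.

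The step I expect to be the real obstacle is precisely this last one. The degree-$12$ comparison cleanly eliminates every leading form except a perfect cube, but the bare cube $f=x_1^3$ satisfies \eqref{cmceq} for \emph{any} $H$ (there $E\equiv0$ and the left side equals $4(n-1)^2H^2(9x_1^4)^3$, visibly divisible by $x_1^3$), so the hypothesis that $f$ is irreducible can only enter through the lower-order terms $p_1,p_0$; keeping the graded bookkeeping of \eqref{cmceq} under control—possibly by also restricting it to the affine slices $\{x_1=\text{const}\}$, where it becomes a cmc-type identity for $p_0$ in $\R^{n-1}$ with a mismatched constant—is where the genuine work of the argument lies.
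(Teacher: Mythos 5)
Your first half is sound and runs parallel to the paper's own argument: the degree-$12$ comparison gives $\widetilde{H}^2|\nabla f_3|^6=p_9f_3$, and your case analysis of the real factorization of $f_3$ is a legitimate substitute for Lemma~\ref{lem1} (both routes ultimately rest on the same nontrivial input, that an irreducible real cubic has a regular real zero, which the paper imports as Lemma~2.5 of \cite{T1} and you merely assert). The normalization $f=x_1^3+x_1p_1(x')+p_0(x')$ and the cancellation of the degree-$5$ part of $E$, so that only $|\nabla f|^6$ feeds the homogeneous components of degree $\ge 9$, are also correct.

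The gap is the endgame, which you explicitly leave as ``I would show\dots''; this is not a routine verification but the place where all the remaining content of the theorem sits, and your predicted outcome is not what actually happens. The graded coefficient system does \emph{not} force $f$ to be reducible or $H=0$: for instance $f=x_1^3+x_1y_1+y_2$ is irreducible, and it satisfies every constraint extractable from the homogeneous parts of degree $8$ through $12$. What the paper actually proves from those identities --- and it must go down to degree $8$, where $E^2$ \emph{does} contribute the term $-6^4(\trace A)^2x^8$, so your ``only $d\ge9$'' bookkeeping is not enough --- is solely that the quadratic form $\by^tA\by$ in $f_2$ vanishes, via the chain $p_9=L_0x^9$, $p_8=S_2x^6$, \dots, terminating in $L_0f_2^4(0,\by)=0$ with $L_0=3^6\widetilde{H}^2\ne0$. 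Once $A=0$, the function $f$ depends only on $x_1$ and two linear functions of $x'$, i.e.\ it is a cylinder over a cubic surface in $\R^3$, and the proof is completed by invoking the separate theorem of \cite{P1} that there are no degree-three algebraic cmc surfaces in $\R^3$. Your sketch contains neither the reduction to $A=0$ nor the appeal to the three-dimensional result, so the step you defer would in effect require you to reprove \cite{P1} from scratch; as written, the proposal is an outline of the first half of the argument, not a proof.
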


Some further remarks are worth making at this point. Recall that
\begin{equation}\label{laplacep}
\Delta_p f:=|\nabla f|^2\Delta f +\frac{p-2}{2} \nabla f^t \,\nabla  |\nabla f|^2=0
\end{equation}
is called the  $p$-Laplace equation. Then for $H=0$, the cmc equation \eqref{cmceq} is closely related to the $1$-Laplace equation,
which appears very naturally in the context of minimal cones, see section~6 in \cite{NTVbook}. The assumption $H\ne0$ is crucial, because otherwise, when $n>3$, there exists irreducible algebraic minimal ($H=0$) hypersurfaces of any arbitrarily higher degree, see for instance \cite{T1}.

On the other hand, it is interesting to compare Theorem~\ref{th1} with a similar situation for polynomial solutions of the general $p$-Laplacian equation, $p\ne2$. It follows from \cite{Lewis1980} and the recent results in \cite{T2}, \cite{Lewis2016} that there are no homogeneous polynomial solutions to \eqref{laplacep} in $\R{}^n$ of degree $d=2,3,4$ for any $n\ge2$, of degree $5$ in $\R^{3}$ and also of any degree in $\R^{2}$.

This work was done while the first author visited Link\"oping University.
He would like to thank the Mathematical Institution of Link\"oping University for hospitality.

\section{Proof of the main result}

Before proving the Theorem~\ref{th1}, let us prove some lemmas.

\begin{lemma}\label{lem1}
If $g:\mathbb{R}^n\to \mathbb{R}$ is a polynomial of degree $3$ and $|\nabla g|$ vanishes anytime $g$ vanishes, then $g=l^3$ where $l$ is a linear function.
\end{lemma}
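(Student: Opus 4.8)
The plan is to sidestep algebraic geometry entirely and argue by restricting $g$ to lines. Since $g$ has degree exactly three, its top-degree homogeneous part $g_3$ is not identically zero, so fix once and for all a vector $v\in\mathbb R^n$ with $c:=g_3(v)\neq0$. For $a\in\mathbb R^n$ set $\psi_a(t):=g(a+tv)$; this is a polynomial in the single variable $t$ of degree exactly three, with leading coefficient $c$ (in particular independent of $a$).

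The first step is to observe that the hypothesis forces every real root of every $\psi_a$ to be a \emph{multiple} root. Indeed, if $\psi_a(t_0)=0$ then $x_0:=a+t_0v$ lies on $g^{-1}(0)$, so $|\nabla g(x_0)|=0$, hence $\nabla g(x_0)=0$ and therefore $\psi_a'(t_0)=\nabla g(x_0)\cdot v=0$. Now a cubic with leading coefficient $c$, all of whose real roots are multiple, must equal $c\,(t-\tau)^3$ for a single $\tau\in\mathbb R$: it has a real root $t_0$, which is then multiple, so $\psi_a=c\,(t-t_0)^2(t-t_1)$; since a real cubic has an even number of non-real roots, $t_1$ is real, and being a root of $\psi_a$ it is multiple too, which forces $t_1=t_0$. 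Thus for every $a\in\mathbb R^n$,
\[
g(a+tv)=c\,\bigl(t-\tau(a)\bigr)^3 .
\]

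The second step is to extract $\tau$ and reassemble $g$. Comparing the coefficients of $t^{2}$ on the two sides, $-3c\,\tau(a)$ equals the $t^{2}$-coefficient of $g(a+tv)$; expanding $g=g_3+g_2+g_1+g_0$ one sees this coefficient is an affine function of $a$, with only $g_3$ and $g_2$ contributing (a linear term and a constant, respectively). Hence $\tau$ is affine in $a$, and putting $t=0$ gives $g(a)=\psi_a(0)=-c\,\tau(a)^3$. Setting $l:=-c^{1/3}\tau$, a real affine function, we obtain $g=l^3$; and $l$ is non-constant, since otherwise $g$ would be constant, contradicting $\deg g=3$. This is exactly the assertion of the lemma.

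I do not expect a genuine obstacle. The only points requiring a little care are the elementary classification of real cubics having no simple real root (used in the first step), the bookkeeping of which homogeneous piece of $g$ feeds which power of $t$ (used in the second), and the harmless observation that a real cube root is available. It is worth remarking that the more structural route — using the Real Nullstellensatz to conclude that $g$ is divisible by the square of its square-free part and then going through the few degree-three factorization types — would need the real (not merely the complex) Nullstellensatz together with the density of smooth real points on a sign-changing hypersurface; the line-slicing argument avoids all of that.
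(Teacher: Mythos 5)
Your argument is correct, and it is genuinely different from the proof in the paper. The paper proceeds through factorization: it invokes Lemma~2.5 of \cite{T1} (an irreducible real cubic has at least one regular point on its zero set) to rule out irreducibility, writes $g=uv$ with $v$ linear, uses $\nabla g=u\nabla v+v\nabla u$ together with the hypothesis to see that $u$ vanishes on $\{v=0\}$, applies the real Nullstellensatz to get $u=wv$, hence $g=wv^2$, and repeats the argument to force $w$ to be proportional to $v$. You instead slice by lines: choosing a direction $v$ with $g_3(v)\neq0$, the hypothesis makes every real root of $t\mapsto g(a+tv)$ a multiple root, so this cubic is the perfect cube $c\,(t-\tau(a))^3$; comparing $t^2$-coefficients shows $\tau$ is affine in $a$, and evaluating at $t=0$ reassembles $g=-c\,\tau^3=l^3$. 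Each of your steps checks out: the leading coefficient is indeed $g_3(v)$ independently of $a$, a real cubic whose real roots are all multiple is a cube, the $t^2$-coefficient of $g(a+tv)$ receives only a term linear in $a$ from $g_3$ and the constant $g_2(v)$ from $g_2$, and the real cube root absorbs the constant. What your route buys is self-containedness: it avoids the citation to \cite{T1} and the real Nullstellensatz entirely (together with the attendant facts about real points of hypersurfaces), at the cost of the small bookkeeping with coefficients; the paper's route is shorter modulo the quoted ingredients and makes the divisibility structure $g=wv^2$ explicit, which is the natural statement if one wants to generalize to divisibility arguments in higher degree. A minor remark: your $l$ is affine rather than homogeneous linear, but so is the paper's, and in the application to $f_3$ homogeneity of $g$ forces the constant term of $l$ to vanish, so nothing is lost.
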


\begin{proof}
Lemma 2.5 in \cite{T1} states that if $g$ is a cubic irreducible polynomial then $g^{-1}(0)$ contains at least one regular point. Therefore, we conclude that our $g$ cannot be irreducible and then, for some polynomial $u$ of degree 2, we have that $g=uv$ where $v$ is linear. Since $\nabla v$ never vanishes and $\nabla g=u\nabla v+v\nabla u$ we conclude that $u(x)=0$ whenever $v(x)=0$. From the real nullstellensatz theorem  we conclude that $u=wv$ for some polynomial $w$. Thus $g=wv^2$. Applying the same argument that we did above, using the polynomial $w$ instead of the polynomial $v$, we conclude that $v^2$ must be a multiple of $w$. Therefore the lemma follows.
\end{proof}

\begin{proof}[Proof of Theorem~\ref{th1}]
Without loss of generality we can assume that the origin is in an element in the hypersurface, in other words, we assume that  $f(0,\dots,0)=0$. Let us write
$$
f=f_3+f_2+f_1\quad\hbox{and} \quad p=p_9+\dots +p_1+p_0
$$
where $f_i=f_i(x)$ and $p_i=p_i(x)$ are homogeneous polynomials of degree $i$.
Comparing the degree 12 homogeneous part in both sides of equation (\ref{cmceq}), we conclude that
\begin{equation}\label{deg12}
\widetilde{H}^2|\nabla f_3|^6=p_9 f_3, \qquad \text{where }\widetilde{H}=2(n-1)H\ne0.
\end{equation}
Using Lemma \ref{lem1}, we conclude that $f_3=l^3$ where $l$ is linear. Since $f(0)=0$, then $l(0)=0$. By doing a rotation and a dilation of the coordinates, if necessary, we can assume that $l=x_1$. For notation sake, let us denote the coordinates in $\mathbb{R}^n$ as $x=x_1$ and $y=(y_1,\dots,y_{n-1})^t=(x_2,\dots,x_n)^t$. With this notation we have the for the homogenous parts in $f=f_3+f_2+f_1$:
\begin{equation}\label{fff}
\begin{split}
f_3&=x^3\\
f_2&=\by^tA\by+k_0 x^2+\by^t\br\, x\\
f_1&=k_1 x+y^t\bs,
\end{split}
\end{equation}
where $A$ is a symmetric $(n-1)\times (n-1)$ matrix, $\br =(r_1,\dots,r_{n-1})^t$ and $\bs =(s_1,\dots,s_{n-1})^t$.

 At this moment we would like to point out that to finish the theorem it is enough to show that the matrix $A$ is zero. The reason for this is that if $A$ is the zero matrix and we relabel the axis so that the vectors $\br $ and $\bs $ are in the plane spanned by the vectors  $(1,0,\dots,0)^t$ and $(0,1,0,\dots,0)^t$, them, the function $f$ would be a function that depends only on the three variables $x$, $y_1$ and $y_2$ and, by \cite{P1} we already know that there are not algebraic cmc surfaces in $\mathbb{R}^3$.

 Let us continue the proof by showing that the matrix $A$ must be the zero matrix. A direct computation yields the following decomposition into homogeneous parts:
\begin{equation}\label{grad1}
|\nabla f|^2=h_0+h_1+h_2+h_3+h_4,
\end{equation}
where $h_k$ is a homogeneous polynomial of degree $k$ given explicitly by
\begin{eqnarray*}
h_0 &=& k_1^2+|\bs|^2 \\
h_1 &=& 4 \bs^t A\by +4 k_0 k_1 x+2 k_1 (\br^t\by )+2 x \,\br^t\bs  \\
h_2 &=&4 x\, \br^tA\by +4 |A\by|^2 +4 k_0 x\, \br^t\by +(\br^t\by )^2+x^2 (4 k_0^2+6 k_1+|\br|^2)\\
h_3&=&12 k_0 x^3+6 x^2\, \br^t\by \\
h4&=&9 x^4
\end{eqnarray*}
Using this representation, we obtain
\begin{align*}
\Delta_1 f &:=2 |\nabla f| \Delta f-\nabla f^t \nabla|\nabla f|^2\\
&=4 |\nabla f|^2 \trace A +16 (k_0+3 x)(\bs^tA\by + |A\by |^2)-8 \br^tA\by  \left(k_1+\br^t\by -3 x^2\right)\\
& -8 x\, \bs^tA\br -4 x^2\, \br^tA\br -16 x\, \br^t A^2\by -16 \bs^t A^2\by -4 \bs^tA.\bs -16 \by^tA^3\by \\
& -4 x (k_0 x+k_1+\br^t\by )|\br|^2+4 (k_0+3 x)|\bs|^2-4 \br^t\bs  \left(k_1+\br^t\by -3 x^2\right),
\end{align*}
which implies that
$$
\Delta_1(f)\equiv  4 \trace A  |\nabla f|^2 \mod\mathrm{Pol}_3,
$$
where we follow an agreement to write
$$
A\equiv B\mod \mathrm{Pol}_k
$$
if $A-B$ is a polynomial of degree $k$ or less.

It also follows from \eqref{grad1} that
$$
|\nabla f|^4\equiv 81\,x^8 \mod \mathrm{Pol}_7
$$
Thus, using the made observations,  equation (\ref{cmceq}) becomes
\begin{equation}\label{hd}
(p_0+p_1+\dots +p_9) (f_1+f_2+f_3)\equiv  \widetilde{H}^2|\nabla f|^6
-6^4(\trace A)^2x^8\mod \mathrm{Pol}_7.
\end{equation}

Our next goal is by using the decomposition for $|\nabla f|^2$ in terms of the $h_i$'s and and the
expression for $(\Delta_1f)^2-4(n-1)^2H^2|\nabla f|^6$ in terms of the $|\nabla f|^2$ up to order 8, deduce that the matrix $A$ should be the zero matrix. To this end, we consider  \eqref{hd} as a polynomial identity with respect to a variable $x$ over the ring $\R{}[y]$.

A key observation is that since $h_4=Q_0x^4$ and $h_3=Q_2x^2$, where $Q_i$ is a homogeneous polynomial of degree  $i$, one immediately obtains from \eqref{grad1}  the following homogenous decomposition:
\begin{equation}\label{grad3}
\widetilde{H}^2|\nabla f|^6
-6^4(\trace A)^2x^8=L_0x^{12}+L_1 x^{10}+L_2 x^8+L_3 x^6+L_4x^4 \mod \mathrm{Pol}_7.
\end{equation}
Here $L_i$ are homogeneous polynomials of degree  $i$. In particular,
\begin{equation}\label{L0}
L_0=3^6\widetilde{H}^2\ne0.
\end{equation}
Next, identifying  the homogeneous parts of degrees $k$, $8\le k\le 12$, in both sides of \eqref{hd} one obtains respectively
\begin{align}
p_9f_3&=L_0x^{12}\nonumber\\
p_8f_3+p_9f_2&=L_1x^{10}\nonumber\\
p_7f_3+p_8f_2+p_9f_1&=L_2x^{8}\label{eqq}\\
p_6f_3+p_7f_2+p_8f_1\qquad\quad &=L_3x^{6}\nonumber\\
p_5f_3+p_6f_2+p_7f_1\qquad\quad\qquad\quad&=L_4x^{4}.\nonumber
\end{align}
The first two equations yield
$$
p_9=L_0x^9
$$
and
$$
p_8=S_2x^{6},
$$
where $S_2=L_1x^2-L_0f_2$. Arguing similarly, one easily finds that
\begin{align*}
p_7&=S_4x^3, \\
p_6&=(L_3-S_2f_1)x^3-S_4f_2,
\end{align*}
where $S_4=L_2x^2-L_0f_1Sx^3-S_2f_2$. Thus, evaluating the last identity of \eqref{eqq} for  $x=0$ and taking into account that $f_3(0)=p_7(0)=0$ we obtain
$$
p_6(0)f_2(0)=0.
$$
Since
$$
p_6(0)=-S_4(0)f_2(0)=S_2(0)f_2^2(0)=-L_0f_2^3(0),
$$
it follows by \eqref{L0} that $f_2(0)=0$, therefore using \eqref{fff} we obtain $f_2(0)=\by^tA\by=0$ for any $y\in \R^{n-1}$. Since $A$ is symmetric, we have $A=0$. As explained before, after sowing that $A$ vanishes we have that $f$ essentially depends on three variable and therefore it cannot define an algebraic hypersurface with constant mean curvature. The theorem is proved.

\end{proof}


 \end{document}